\documentclass[preprint,12pt,authoryear]{elsarticle}

\usepackage{amsmath,amssymb,amsthm,mathtools}
\usepackage{hyperref}

\newtheorem{theorem}{Theorem}
\newtheorem{proposition}{Proposition}
\newtheorem{lemma}{Lemma}
\newtheorem{corollary}{Corollary}
\newtheorem{definition}{Definition}

\newtheorem{remark}{Remark}

\newcommand{\dd}{\,\mathrm{d}}
\newcommand{\HCM}{\mathrm{HCM}}

\journal{Statistics \& Probability Letters}

\makeatletter
\def\ps@pprintTitle{%
    \let\@oddhead\@empty
    \let\@evenhead\@empty
    \let\@oddfoot\@empty
    \let\@evenfoot\@empty}
\makeatother

\hypersetup{
    colorlinks=true,
    linkcolor=blue,
    citecolor=blue,
    urlcolor=blue
}

\begin{document}

\begin{frontmatter}

\title{A Bessel-zero obstruction to hyperbolic complete monotonicity of noncentral chi-square densities}

\author[addr1]{Domingos S. P. Salazar\corref{cor1}}
\address[addr1]{Unidade de Educa\c{c}\~ao a Dist\^ancia e Tecnologia,
Universidade Federal Rural de Pernambuco,
52171-900 Recife, Pernambuco, Brazil}
\cortext[cor1]{Corresponding author.}

\begin{abstract}
Baricz, Prabhu K, Singh and Vijesh asked for the optimal hyperbolically completely monotone (HCM) range of the noncentral chi-square density.  The problem was motivated by the gap between known infinite divisibility and the stronger generalized-gamma-convolution/HCM classification.  We prove that the HCM range is exactly the central line.  More generally, for \(a,b>0\) and \(\theta\ge0\),
\begin{equation}
p_{a,b,\theta}(x)
=
\frac{b^a e^{-\theta}}{\Gamma(a)}
x^{a-1}e^{-bx}\,{}_0F_1(;a;b\theta x),
\qquad x>0,
\end{equation}
is HCM if and only if \(\theta=0\).  Thus the noncentral chi-square density satisfies
\begin{equation}
\chi_{\mu,\lambda}\in\HCM
\quad\Longleftrightarrow\quad
\lambda=0.
\end{equation}
The proof uses the leading small-\(u\) HCM signs of \(p(uv)p(u/v)\).  These signs are governed by complete Bell polynomials whose signed generating function is
\begin{equation}
e^{bt}{}_0F_1(;a;-b\theta t).
\end{equation}
A positive zero inherited from \(J_{a-1}\) rules out nonnegative Taylor coefficients when \(\theta>0\).  Consequently, Poisson shape-mixtures of HCM gamma densities need not be HCM.
\end{abstract}

\begin{keyword}
hyperbolically completely monotone densities \sep noncentral chi-square distribution
\sep Bessel functions \sep Laguerre polynomials \sep gamma mixtures
\MSC[2020] 60E07 \sep 33C10 \sep 33C15 \sep 44A10
\end{keyword}

\end{frontmatter}

\begingroup
\small
\noindent\textit{Pudim AI disclosure.}
This manuscript was developed with assistance from the Pudim AI research workflow.
Public provenance and APP ledger:
\href{https://github.com/pudim-project/pudim-ai-demo-zetalaw}{Pudim zeta-law repository}, APP-0047.
\par
\endgroup

\medskip

\section{Introduction}

A positive function \(f\) on \((0,\infty)\) is hyperbolically completely monotone (HCM) if for every \(u>0\), the product
\begin{equation}
v\longmapsto f(uv)f(u/v)
\end{equation}
is completely monotone as a function of the hyperbolic variable
\begin{equation}
w=v+v^{-1}.
\end{equation}
Namely, for each fixed \(u>0\), the resulting function \(g_u\) of \(w\) has derivatives of all orders on \((2,\infty)\) and
\begin{equation}
(-1)^n g_u^{(n)}(w)\ge0
\qquad(n\ge0,\ w>2).
\end{equation}
The class was developed systematically by Bondesson in connection with Thorin's generalized gamma convolutions (GGCs) and infinite divisibility; see \cite{Thorin1977,Bondesson1981,Bondesson1992}.  We use only the density-side definition above, but the motivation is the standard hierarchy: an HCM density yields a GGC law, and GGC laws are infinitely divisible \cite{Bondesson1992}.  Gamma densities are elementary HCM examples.  Indeed, if
\begin{equation}
\gamma_{a,b}(x)
=
\frac{b^a}{\Gamma(a)}x^{a-1}e^{-bx},
\qquad a,b>0,
\end{equation}
then
\begin{equation}
\gamma_{a,b}(uv)\gamma_{a,b}(u/v)
=
C_u e^{-bu(v+v^{-1})},
\end{equation}
which is completely monotone in \(w=v+v^{-1}\).

The noncentral chi-square density is a natural deformation of this central gamma case.  For degrees of freedom \(\mu>0\) and noncentrality \(\lambda>0\), the density is
\begin{equation}
\chi_{\mu,\lambda}(x)
=
\frac12 e^{-(x+\lambda)/2}
\left(\frac{x}{\lambda}\right)^{\mu/4-1/2}
I_{\mu/2-1}(\sqrt{\lambda x}),
\qquad x>0,
\end{equation}
with the usual limiting interpretation at \(\lambda=0\), where the central density is \(\gamma_{\mu/2,1/2}\).  The same family is tied to the generalized Marcum-\(Q\) function and to standard applications of the noncentral chi-square distribution; see, for example, \cite{BariczJankovMasirevicPogany2021}.  Equivalently, it is a Poisson mixture of central chi-square densities:
\begin{equation}
\chi_{\mu,\lambda}(x)
=
e^{-\lambda/2}
\sum_{m=0}^{\infty}
\frac{(\lambda/2)^m}{m!}\,
\chi_{\mu+2m,0}(x).
\end{equation}
Since each central chi-square density is a gamma density and hence HCM, this representation gives a natural test case for whether fixed-rate Poisson mixing over the gamma shape preserves HCM.

This test case is also a concrete open range problem in the recent literature on infinitely divisible modified Bessel distributions.  The noncentral chi-square law is already known to be infinitely divisible for all positive degrees of freedom by Ismail and Kelker \cite[Theorem~1.6]{IsmailKelker1979}; it also belongs to Bondesson's larger class of generalized convolutions of mixtures of exponential distributions \cite[p.~43]{Bondesson1981}, as recalled in \cite[Section~4.2]{BariczPrabhuSinghVijesh2026}.  Because HCM densities give GGC laws, and because neighboring modified-Bessel laws such as the \(K\)-distribution fall on the positive side of the HCM/GGC theory \cite[Section~2.5]{BariczPrabhuSinghVijesh2026}, Baricz, Prabhu K, Singh and Vijesh asked for the optimal range of parameters for which the noncentral chi-square density is HCM.  Their Theorem~25 establishes the first two necessary HCM signs: for \(\mu>1\), the hyperbolic product is decreasing when \(\lambda\le\mu\), and convex when \(2\lambda\le\mu\).  It therefore leaves open the full alternating derivative chain required for HCM \cite[Section~4.2 and Theorem~25]{BariczPrabhuSinghVijesh2026}.  Product-closure results such as \cite{Bondesson2015} explain why such positive examples are natural in the broader GGC/HCM closure story.

The main point of this paper is that the HCM range collapses to the central gamma line.  Any positive noncentrality destroys HCM.  More generally, every positive \({}_0F_1\)-gamma deformation of a gamma density fails to be HCM.  This is an HCM statement only; it makes no claim about the GGC range.

The mechanism is also the main conceptual contribution.  The failure is not detected by inspecting the density directly.  Instead, HCM imposes infinitely many alternating derivative signs on the hyperbolic product.  The leading small-\(u\) forms of these signs are governed by complete Bell polynomials.  Their signed generating function is a Bessel function in disguise, and a positive zero of \(J_{a-1}\) forces one of the required signs to fail.

\section{Main results}

We begin with the general \({}_0F_1\)-gamma deformation.  For \(a,b>0\) and \(\theta\ge0\), define
\begin{equation}
\label{eq:p-abtheta}
p_{a,b,\theta}(x)
=
\frac{b^a e^{-\theta}}{\Gamma(a)}
x^{a-1}e^{-bx}\,{}_0F_1(;a;b\theta x),
\qquad x>0.
\end{equation}
Here
\begin{equation}
{}_0F_1(;a;z)
=
\sum_{m=0}^{\infty}\frac{z^m}{m!(a)_m},
\qquad
(a)_m=\frac{\Gamma(a+m)}{\Gamma(a)}.
\end{equation}
The parameter \(\theta\) is the deformation parameter.  When \(\theta=0\), \eqref{eq:p-abtheta} is the gamma density \(\gamma_{a,b}\).  When \(\theta>0\), \eqref{eq:p-abtheta} is a positive \({}_0F_1\)-tilt of \(\gamma_{a,b}\).  The first structural observation is that \eqref{eq:p-abtheta} is not an artificial density.

\begin{proposition}[Poisson-gamma representation]
\label{prop:poisson-gamma}
For \(a,b>0\) and \(\theta\ge0\), \(p_{a,b,\theta}\) is a probability density on \((0,\infty)\), and
\begin{equation}
\label{eq:poisson-gamma}
p_{a,b,\theta}(x)
=
e^{-\theta}\sum_{m=0}^{\infty}
\frac{\theta^m}{m!}\,
\gamma_{a+m,b}(x),
\end{equation}
where
\begin{equation}
\gamma_{\alpha,b}(x)
=
\frac{b^\alpha}{\Gamma(\alpha)}x^{\alpha-1}e^{-bx}.
\end{equation}
Equivalently, if \(N\sim\mathrm{Poisson}(\theta)\), then \(p_{a,b,\theta}\) is the density of a gamma law with random shape \(a+N\) and fixed rate \(b\).
\end{proposition}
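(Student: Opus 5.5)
The plan is to prove the series identity \eqref{eq:poisson-gamma} first, by direct expansion, and then read off the density property and the probabilistic interpretation. Substituting the definition of \({}_0F_1\) into \eqref{eq:p-abtheta} gives
\begin{equation*}
p_{a,b,\theta}(x)
=
\frac{b^a e^{-\theta}}{\Gamma(a)}x^{a-1}e^{-bx}\sum_{m=0}^{\infty}\frac{(b\theta x)^m}{m!\,(a)_m}.
\end{equation*}
Since \((a)_m=\Gamma(a+m)/\Gamma(a)\), the factor \(\Gamma(a)\) cancels. The \(m\)-th term then equals
\begin{equation*}
e^{-\theta}\frac{\theta^m}{m!}\cdot\frac{b^{a+m}}{\Gamma(a+m)}x^{a+m-1}e^{-bx}
=
e^{-\theta}\frac{\theta^m}{m!}\,\gamma_{a+m,b}(x).
\end{equation*}
This is exactly \eqref{eq:poisson-gamma}. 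The manipulation is termwise and exact, and the \({}_0F_1\) series converges absolutely for every real argument because its coefficients decay like \(1/(m!)^2\) up to gamma factors. So no convergence issue arises pointwise in \(x>0\).

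Next, every term in \eqref{eq:poisson-gamma} is nonnegative, and the \(m=0\) term is strictly positive, so \(p_{a,b,\theta}>0\) on \((0,\infty)\). By Tonelli's theorem we may integrate termwise:
\begin{equation*}
\int_0^\infty p_{a,b,\theta}(x)\dd x=e^{-\theta}\sum_{m\ge0}\frac{\theta^m}{m!}\int_0^\infty\gamma_{a+m,b}(x)\dd x=e^{-\theta}e^{\theta}=1.
\end{equation*}
This uses only that each \(\gamma_{a+m,b}\) integrates to \(1\), which follows from the definition of \(\Gamma(a+m)\). Hence \(p_{a,b,\theta}\) is a probability density. The case \(\theta=0\) is included automatically: the series collapses to \(\gamma_{a,b}\).

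Finally, for the equivalent formulation, let \(N\sim\mathrm{Poisson}(\theta)\) and let \(X\mid N=m\) have density \(\gamma_{a+m,b}\). For any Borel set \(A\), the law of total probability gives
\begin{equation*}
P(X\in A)=\sum_m P(N=m)\int_A\gamma_{a+m,b}.
\end{equation*}
By Tonelli again, this equals \(\int_A p_{a,b,\theta}\). So \(p_{a,b,\theta}\) is the density of the gamma law with random shape \(a+N\) and rate \(b\).

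There is no substantive obstacle in this argument. The only point needing care is justifying the interchange of sum and integral, and that is immediate from nonnegativity of all terms via Tonelli.
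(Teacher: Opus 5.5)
Your proof is correct and follows the paper's argument: expand ${}_0F_1$, use $(a)_m=\Gamma(a+m)/\Gamma(a)$ to identify each term as a Poisson-weighted gamma density $e^{-\theta}\frac{\theta^m}{m!}\gamma_{a+m,b}$, and conclude that $p_{a,b,\theta}$ is a probability density. You also spell out steps the paper leaves implicit: Tonelli for termwise integration, strict positivity from the $m=0$ term, and the law-of-total-probability reading of the random-shape formulation. The route is the same.
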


\begin{proof}
Using the defining series of \({}_0F_1\),
\begin{equation}
p_{a,b,\theta}(x)
=
\frac{b^a e^{-\theta}}{\Gamma(a)}
x^{a-1}e^{-bx}
\sum_{m=0}^{\infty}
\frac{(b\theta x)^m}{m!(a)_m}.
\end{equation}
Since \((a)_m=\Gamma(a+m)/\Gamma(a)\), this becomes
\begin{equation}
p_{a,b,\theta}(x)
=
e^{-\theta}
\sum_{m=0}^{\infty}
\frac{\theta^m}{m!}
\frac{b^{a+m}}{\Gamma(a+m)}x^{a+m-1}e^{-bx},
\end{equation}
which is \eqref{eq:poisson-gamma}.  Each summand is a gamma density, and the weights form a Poisson probability mass function.  Hence \(p_{a,b,\theta}\) is a probability density.
\end{proof}

The main theorem is the following sharp classification.

\begin{theorem}[Bessel-zero obstruction for \({}_0F_1\)-gamma deformations]
\label{thm:main}
Let \(a,b>0\) and \(\theta\ge0\).  Then
\begin{equation}
p_{a,b,\theta}\in\HCM
\quad\Longleftrightarrow\quad
\theta=0.
\end{equation}
Equivalently, every positive \({}_0F_1\)-gamma deformation of a gamma density fails to be hyperbolically completely monotone.
\end{theorem}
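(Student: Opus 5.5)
The direction \(\theta=0\) is immediate: \(p_{a,b,0}=\gamma_{a,b}\), and the Introduction already shows that the hyperbolic product of a gamma density is \(C_ue^{-buw}\), which is completely monotone in \(w\). For \(\theta>0\) I will show that some alternating sign \((-1)^n g_u^{(n)}(w)\ge0\) fails once \(u\) is small enough. Set \(F(z)={}_0F_1(;a;z)\) and \(c=b\theta\). Then
\[
p(uv)p(u/v)=K\,u^{2a-2}\exp\Bigl(-buw+\log F(cuv)+\log F(cu/v)\Bigr).
\]
Near \(z=0\) write \(\log F(z)=\sum_{k\ge1}\kappa_k z^k\); this converges because \(F(0)=1\). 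Then the exponent equals
\[
S_u(w)=-buw+\sum_{k\ge1}\kappa_k (cu)^k P_k(w),\qquad P_k(w)=v^k+v^{-k}.
\]
Here \(P_k\) is a polynomial in \(w\) of degree \(k\) with leading coefficient \(1\). For \(w\) in a fixed compact subset of \((2,\infty)\) and \(u\) small, the arguments \(cuv\) and \(cu/v\) stay in a small disc where \(F\neq0\). So \(g_u(w)=Ku^{2a-2}e^{S_u(w)}\) is smooth there, and the series may be differentiated term by term.

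The key step is to extract the leading small-\(u\) term of \(g_u^{(n)}/g_u\). By Faà di Bruno,
\[
\frac{g_u^{(n)}}{g_u}=B_n\bigl(S_u',S_u'',\dots,S_u^{(n)}\bigr),
\]
where \(B_n\) is the complete Bell polynomial. The derivative \(S_u^{(j)}\) only receives contributions from terms with \(k\ge j\). Hence \(S_u^{(j)}=s_j u^j+O(u^{j+1})\), uniformly on compacts, with \(s_1=-b+c\kappa_1\) and \(s_j=j!\,\kappa_j c^j\) for \(j\ge2\). These leading coefficients do not depend on \(w\). Since \(B_n\) is weighted-homogeneous of degree \(n\) when \(x_j\) is given weight \(j\), it follows that
\[
\frac{g_u^{(n)}(w)}{g_u(w)}=u^n\beta_n+O(u^{n+1}),\qquad \beta_n=B_n(s_1,\dots,s_n).
\]
The generating function is then \(\sum_n\beta_n t^n/n!=\exp(\sum_j s_jt^j/j!)=e^{-bt}F(ct)\). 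Therefore HCM forces \((-1)^n\beta_n\ge0\) for every \(n\), which says that every Taylor coefficient of \(\Phi(t)=e^{bt}F(-b\theta t)\) is nonnegative.

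The final step is the Bessel obstruction. Use the identity \(F(-z)=\Gamma(a)z^{(1-a)/2}J_{a-1}(2\sqrt z)\) for \(z>0\). Since \(J_{a-1}\) has a positive zero \(j_{a-1,1}\) for \(a>0\), \(\Phi\) vanishes at \(t_0=j_{a-1,1}^2/(4b\theta)>0\). However, \(\Phi\) is entire and \(\Phi(0)=1\). If all its Taylor coefficients were nonnegative, its power series would give \(\Phi(t)\ge1\) for all \(t>0\), contradicting \(\Phi(t_0)=0\). So some \(n\) has \((-1)^n\beta_n<0\). Fix any \(w>2\). Because \(g_u(w)>0\), the expansion above shows that \((-1)^ng_u^{(n)}(w)\) has the sign of \((-1)^n\beta_n\) for all sufficiently small \(u\), so \(p\notin\HCM\).

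The main obstacle is the bookkeeping in the second step: justifying the uniform \(O(u^{j+1})\) remainders for \(S_u^{(j)}\) and confirming that no lower power of \(u\) enters \(B_n\). Both follow from analyticity of \(\log F\) near \(0\) and the degree structure of the \(P_k\). I would first verify the claimed orders by hand for \(n=1,2\), checking against the first two signs in \cite[Theorem~25]{BariczPrabhuSinghVijesh2026}.
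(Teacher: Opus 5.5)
Your proposal is correct and follows essentially the same route as the paper. The ingredients coincide: the small-\(u\) expansion \(g_u^{(n)}/g_u=u^n\mathcal B_n(d_1,\dots,d_n)+O(u^{n+1})\), where your \(s_j\) are exactly the paper's \(d_j\); the signed generating function \(e^{bt}{}_0F_1(;a;-b\theta t)\); and the zero at \(t_0=j_{a-1,1}^2/(4b\theta)\), which contradicts \(\Phi(t)\ge1\). The paper handles the uniform-remainder step you flag with the explicit bound \(\sup_{w\in K}|P_m^{(r)}(w)|\le C_{K,r}m^rR_K^m\), which is the estimate your analyticity argument needs.
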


The proof is given in Section~\ref{sec:proof-main}, after the small-\(u\) Bell obstruction and Bessel-zero lemmas.

\begin{corollary}[Noncentral chi-square HCM range]
\label{cor:noncentral-chi-square}
Let \(\mu>0\) and \(\lambda\ge0\).  The noncentral chi-square density \(\chi_{\mu,\lambda}\) is hyperbolically completely monotone if and only if
\begin{equation}
\lambda=0.
\end{equation}
Thus the HCM range is exactly the central gamma line.
\end{corollary}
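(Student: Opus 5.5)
The corollary follows from Theorem~\ref{thm:main} once we show that the noncentral chi-square density is a member of the family \eqref{eq:p-abtheta}, with parameters
\begin{equation}
a=\frac{\mu}{2},\qquad b=\frac12,\qquad \theta=\frac{\lambda}{2}.
\end{equation}
That is, we claim \(\chi_{\mu,\lambda}=p_{\mu/2,\,1/2,\,\lambda/2}\) for all \(\mu>0\) and \(\lambda\ge0\). The map \(\lambda\mapsto\theta=\lambda/2\) is a bijection of \([0,\infty)\) onto itself with \(\theta=0\iff\lambda=0\). Hence Theorem~\ref{thm:main} applied with \(a=\mu/2>0\) and \(b=1/2>0\) gives \(\chi_{\mu,\lambda}\in\HCM\iff\lambda=0\).

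The quickest way to verify the identification is through the Poisson representations. Proposition~\ref{prop:poisson-gamma} with \(a=\mu/2\), \(b=1/2\), \(\theta=\lambda/2\) gives
\begin{equation}
p_{\mu/2,1/2,\lambda/2}
=e^{-\lambda/2}\sum_{m\ge0}\frac{(\lambda/2)^m}{m!}\,\gamma_{\mu/2+m,\,1/2}.
\end{equation}
Each central density satisfies \(\chi_{\mu+2m,0}=\gamma_{\mu/2+m,1/2}\), so this sum is exactly the Poisson mixture representation of \(\chi_{\mu,\lambda}\) recalled in the introduction.

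For a self-contained check from the Bessel formula, we would use the series
\begin{equation}
I_\nu(z)=\frac{(z/2)^\nu}{\Gamma(\nu+1)}\,{}_0F_1\!\left(;\nu+1;\frac{z^2}{4}\right).
\end{equation}
Take \(\nu=\mu/2-1\) and \(z=\sqrt{\lambda x}\). The prefactor \((x/\lambda)^{\nu/2}\) combines with \((\lambda x)^{\nu/2}2^{-\nu}\) to give \(x^{\mu/2-1}2^{1-\mu/2}\), and the argument of \({}_0F_1\) becomes \(\lambda x/4=b\theta x\). Collecting constants,
\begin{equation}
\chi_{\mu,\lambda}(x)
=\frac{(1/2)^{\mu/2}e^{-\lambda/2}}{\Gamma(\mu/2)}\,x^{\mu/2-1}e^{-x/2}\,{}_0F_1\!\left(;\frac{\mu}{2};\frac{\lambda x}{4}\right),
\end{equation}
which is \eqref{eq:p-abtheta} with the stated parameters. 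At \(\lambda=0\) both sides reduce to \(\gamma_{\mu/2,1/2}\), consistent with the limiting convention.

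There is no real obstacle here; all the substance is in Theorem~\ref{thm:main}. The only points needing care are the bookkeeping of the powers of \(2\) and \(\lambda\) in the Bessel normalization, and the boundary case \(\lambda=0\). The Poisson-mixture route avoids both of these issues.
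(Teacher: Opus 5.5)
Your proposal is correct and matches the paper's proof: the paper likewise identifies \(\chi_{\mu,\lambda}=p_{\mu/2,1/2,\lambda/2}\) via the series for \(I_{\mu/2-1}\) (your ``self-contained check,'' with the same constant bookkeeping) and then invokes Theorem~\ref{thm:main}. The differences are cosmetic: the paper treats \(\lambda=0\) directly as the gamma case rather than through the bijection \(\lambda\mapsto\lambda/2\), and it does not use your Poisson-mixture shortcut, which is an equally valid way to confirm the identification given the standard mixture representation.
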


\begin{proof}
If \(\lambda=0\), then \(\chi_{\mu,0}=\gamma_{\mu/2,1/2}\), which is HCM by the gamma case in Theorem~\ref{thm:main}.  Suppose now that \(\lambda>0\).  Put
\begin{equation}
a=\frac{\mu}{2},
\qquad
b=\frac12,
\qquad
\theta=\frac{\lambda}{2}.
\end{equation}
Using the standard modified-Bessel representation of the noncentral chi-square density \cite{JohnsonKotzBalakrishnan1994} and the standard series identity for \(I_\nu\) \cite[Eq.~10.25.2]{DLMF},
\begin{equation}
I_{a-1}(z)
=
\frac{(z/2)^{a-1}}{\Gamma(a)}
{}_0F_1\!\left(;a;\frac{z^2}{4}\right),
\end{equation}
one obtains
\begin{equation}
\chi_{\mu,\lambda}(x)
=
\frac{2^{-a}e^{-\lambda/2}}{\Gamma(a)}
x^{a-1}e^{-x/2}
{}_0F_1\!\left(;a;\frac{\lambda x}{4}\right)
=
p_{a,1/2,\lambda/2}(x).
\end{equation}
By Theorem~\ref{thm:main}, this density is not HCM for \(\lambda>0\).  Together with the central case, this proves the claim.
\end{proof}

The same argument gives a more structural probabilistic consequence.

\begin{corollary}[Poisson shape-mixtures of gamma densities need not be HCM]
\label{cor:poisson-mixture-not-hcm}
The class of HCM densities is not closed under Poisson mixing over the gamma shape parameter, even when the rate is fixed.  More precisely, for every \(a,b,\theta>0\),
\begin{equation}
p_{a,b,\theta}(x)
=
e^{-\theta}\sum_{m=0}^{\infty}
\frac{\theta^m}{m!}\,
\gamma_{a+m,b}(x)
\end{equation}
is a Poisson mixture of HCM gamma densities, but \(p_{a,b,\theta}\notin\HCM\).
\end{corollary}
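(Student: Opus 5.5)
The corollary is Theorem~\ref{thm:main} combined with Proposition~\ref{prop:poisson-gamma}, so the plan is to assemble those two results and add nothing new.

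Fix \(a,b,\theta>0\).

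\emph{Mixture identity.} Proposition~\ref{prop:poisson-gamma} gives the displayed identity directly:
\[
p_{a,b,\theta}(x)=e^{-\theta}\sum_{m\ge0}\frac{\theta^m}{m!}\,\gamma_{a+m,b}(x).
\]
The weights \(e^{-\theta}\theta^m/m!\) form the Poisson\((\theta)\) mass function. All components share the rate \(b\), and only the shape \(a+m\) is random.

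\emph{Each component is HCM.} This follows from the computation in the introduction. For every \(m\ge0\) and every \(u>0\),
\[
\gamma_{a+m,b}(uv)\,\gamma_{a+m,b}(u/v)=C_{u,m}\,e^{-bu(v+v^{-1})},
\qquad C_{u,m}>0 .
\]
As a function of \(w=v+v^{-1}\) this is \(C_{u,m}e^{-buw}\). Its derivatives satisfy \((-1)^n\frac{d^n}{dw^n}\bigl(C_{u,m}e^{-buw}\bigr)=C_{u,m}(bu)^n e^{-buw}\ge0\), so it is completely monotone. Hence every \(\gamma_{a+m,b}\) is HCM. This is also the \(\theta=0\) direction of Theorem~\ref{thm:main} applied with shape \(a+m\).

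\emph{The mixture is not HCM.} Since \(\theta>0\), Theorem~\ref{thm:main} gives \(p_{a,b,\theta}\notin\HCM\). So \(p_{a,b,\theta}\) is a Poisson mixture of HCM gamma densities that is itself not HCM. This is precisely the failure of closure under fixed-rate Poisson shape-mixing, which proves the corollary.

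There is no genuine obstacle at the level of the corollary; all of the difficulty lies in the ``only if'' direction of Theorem~\ref{thm:main}. The only point needing care is logical: the corollary must invoke that direction of the theorem, not the Bell-polynomial and Bessel-zero lemmas themselves.
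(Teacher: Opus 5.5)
Your proposal is correct and matches the paper's proof: the identity comes from Proposition~\ref{prop:poisson-gamma}, each component is HCM by the gamma case, and the $\theta>0$ direction of Theorem~\ref{thm:main} shows that $p_{a,b,\theta}\notin\HCM$. Your explicit check that $\gamma_{a+m,b}(uv)\gamma_{a+m,b}(u/v)=C_{u,m}e^{-buw}$ is completely monotone only spells out the gamma case the paper cites.
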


\begin{proof}
Each gamma density \(\gamma_{a+m,b}\) is HCM.  The identity is Proposition~\ref{prop:poisson-gamma}.  Since \(\theta>0\), Theorem~\ref{thm:main} gives \(p_{a,b,\theta}\notin\HCM\).
\end{proof}

\section{Hyperbolic complete monotonicity and the small-\texorpdfstring{\(u\)}{u} test}

We recall the two sign conventions used below.

\begin{definition}[Completely monotone functions]
A \(C^\infty\) function \(\phi\) on an interval is completely monotone if
\begin{equation}
(-1)^n\phi^{(n)}(x)\ge0
\end{equation}
for every \(n\ge0\) and every \(x\) in the interval.
\end{definition}

\begin{definition}[Hyperbolically completely monotone functions]
A positive function \(f\) on \((0,\infty)\) is hyperbolically completely monotone (HCM) if for every \(u>0\), the function
\begin{equation}
v\mapsto f(uv)f(u/v)
\end{equation}
is completely monotone as a function of
\begin{equation}
w=v+v^{-1}.
\end{equation}
Equivalently, for every \(u>0\), the function
\begin{equation}
H_u(w)=f(uv(w))f(u/v(w)),
\qquad
v(w)=\frac{w+\sqrt{w^2-4}}2,
\qquad w>2,
\end{equation}
satisfies
\begin{equation}
(-1)^n\frac{\dd^n}{\dd w^n}H_u(w)\ge0
\end{equation}
for all \(n\ge0\) and all \(w>2\).
\end{definition}

All complete-monotonicity assertions in the hyperbolic variable below are understood on the interval \(w\in(2,\infty)\).  For the density \(p_{a,b,\theta}\), the power \(x^{a-1}\) disappears from the hyperbolic product:
\begin{equation}
(uv)^{a-1}(u/v)^{a-1}=u^{2a-2}.
\end{equation}
Thus only the exponential factor and the \({}_0F_1\) factor can affect the HCM signs.

Let
\begin{equation}
F_a(z)={}_0F_1(;a;z).
\end{equation}
Since \(F_a(0)=1\), its logarithm is analytic near \(0\).  Write
\begin{equation}
\label{eq:log-Fa}
\log F_a(z)=\sum_{m=1}^{\infty}q_m z^m
\end{equation}
in a sufficiently small disk around the origin.  In particular,
\begin{equation}
q_1=\frac1a.
\end{equation}

For \(m\ge0\), define the hyperbolic polynomials
\begin{equation}
P_m(w)=v^m+v^{-m},
\qquad
w=v+v^{-1}.
\end{equation}
Then \(P_0(w)=2\), \(P_1(w)=w\), and
\begin{equation}
P_{m+1}(w)=wP_m(w)-P_{m-1}(w).
\end{equation}
Hence \(P_m\) is a polynomial of degree \(m\) with leading coefficient \(1\).

We shall use the complete exponential Bell polynomials \(\mathcal B_n\), defined by
\begin{equation}
\label{eq:bell-def}
\exp\left(\sum_{m=1}^{\infty}x_m\frac{z^m}{m!}\right)
=
\sum_{n=0}^{\infty}
\mathcal B_n(x_1,\dots,x_n)\frac{z^n}{n!},
\end{equation}
with \(\mathcal B_0=1\).

\begin{proposition}[Leading Bell obstruction]
\label{prop:bell-obstruction}
Let \(a,b,\theta>0\), and let \(p=p_{a,b,\theta}\).  Define
\begin{equation}
H_u(w)=p(uv(w))p(u/v(w)),
\qquad w>2.
\end{equation}
Let
\begin{equation}
d_1=b\left(\frac{\theta}{a}-1\right),
\qquad
d_m=m!q_m(b\theta)^m
\quad(m\ge2),
\end{equation}
where the \(q_m\)'s are defined by \eqref{eq:log-Fa}.  Then, for each fixed \(n\ge1\),
\begin{equation}
\label{eq:bell-leading}
\frac{1}{H_u(w)}
\frac{\dd^n}{\dd w^n}H_u(w)
=
u^n\mathcal B_n(d_1,\dots,d_n)
+
O(u^{n+1})
\qquad
(u\downarrow0),
\end{equation}
locally uniformly for \(w\) in compact subintervals of \((2,\infty)\).

Consequently, if \(p_{a,b,\theta}\) is HCM, then
\begin{equation}
\label{eq:hcm-necessary-bell}
(-1)^n\mathcal B_n(d_1,\dots,d_n)\ge0
\qquad(n\ge1).
\end{equation}
\end{proposition}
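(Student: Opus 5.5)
The plan is to pass to the logarithm of the hyperbolic product and use Faà di Bruno's formula. Because \(H_u>0\), we have \(H_u=e^{L_u}\) with \(L_u=\log H_u\), and the standard identity
\begin{equation}
\frac{1}{H_u(w)}\frac{\dd^n}{\dd w^n}H_u(w)=\mathcal B_n\bigl(L_u'(w),L_u''(w),\dots,L_u^{(n)}(w)\bigr)
\end{equation}
follows from \eqref{eq:bell-def}, since \(e^{L(w+z)-L(w)}=\exp\bigl(\sum_{k\ge1}L^{(k)}(w)z^k/k!\bigr)\). The whole task then reduces to computing the small-\(u\) behaviour of the derivatives \(L_u^{(k)}\).

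Fix a compact interval \(K\subset(2,\infty)\). On \(K\) both \(v(w)\) and \(1/v(w)\) are bounded, so for \(u\) small the arguments \(b\theta uv\) and \(b\theta u/v\) lie in the disk where \eqref{eq:log-Fa} converges. Taking logarithms in \eqref{eq:p-abtheta}, the factor \(x^{a-1}\) contributes only the constant \(u^{2a-2}\), and we get
\begin{equation}
L_u(w)=C_u-bu\,w+\sum_{m\ge1}q_m(b\theta u)^m\bigl(v^m+v^{-m}\bigr)
=C_u+u\,d_1\,w+\sum_{m\ge2}q_m(b\theta u)^mP_m(w).
\end{equation}
Here we used \(q_1=1/a\) and \(P_1(w)=w\), so the linear terms combine into \(b(\theta/a-1)uw=d_1uw\). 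Since \(P_m\) is monic of degree \(m\), we have \(P_m^{(k)}\equiv0\) for \(m<k\) and \(P_k^{(k)}\equiv k!\). Differentiating term by term therefore gives
\begin{equation}
L_u^{(k)}(w)=u^k d_k+O(u^{k+1}),\qquad k\ge1,
\end{equation}
with \(d_k=k!\,q_k(b\theta)^k\) for \(k\ge2\) and \(d_1\) as stated.

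I expect the main obstacle to be justifying the term-by-term differentiation and making the \(O(u^{k+1})\) uniform on \(K\). The first thing I would try is to avoid the series in \(w\) altogether. View \(G(u,w)=\log F_a(b\theta uv(w))+\log F_a(b\theta u/v(w))\) as jointly real-analytic in \((u,w)\) on \((-\varepsilon,\varepsilon)\times K'\), where \(K'\) is a slightly larger compact interval; it is in fact holomorphic on a small complex neighbourhood, since \(v(w)\) is analytic for \(w>2\). Then \(\partial_w^kG\) is analytic in \(u\), and Cauchy estimates on a polydisc bound its Taylor coefficients in \(u\) uniformly for \(w\in K\). The coefficient of \(u^m\) in \(G\) is exactly \(q_m(b\theta)^mP_m(w)\), which is a polynomial in \(w\) of degree \(m\). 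So the coefficients of \(u^m\) in \(\partial_w^kG\) vanish for \(m<k\), the coefficient of \(u^k\) is the constant \(k!\,q_k(b\theta)^k\), and the remainder is \(O(u^{k+1})\) uniformly on \(K\).

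Finally, \(\mathcal B_n(x_1,\dots,x_n)\) is a sum of monomials \(\prod_i x_{k_i}\) with \(\sum_i k_i=n\), so it is weighted-homogeneous of weight \(n\) when \(x_k\) is given weight \(k\). Substituting \(x_k=u^kd_k+O(u^{k+1})\) into each monomial gives \(u^n\prod_i d_{k_i}+O(u^{n+1})\), and summing the monomials yields \eqref{eq:bell-leading}. For the consequence: if \(p\) is HCM, then \((-1)^nH_u^{(n)}(w)\ge0\), and since \(H_u>0\) we may divide by \(u^nH_u(w)\) to get \((-1)^n\mathcal B_n(d_1,\dots,d_n)+O(u)\ge0\). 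Letting \(u\downarrow0\) at any fixed \(w\in K\) gives \eqref{eq:hcm-necessary-bell}.
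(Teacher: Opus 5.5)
Your proposal is correct and follows the paper's proof essentially step for step: the logarithm of $H_u$ expanded in the $P_m(w)$, the Bell-polynomial form of the derivative of $e^{L_u}$, the facts that $P_m^{(k)}\equiv 0$ for $m<k$ and $P_k^{(k)}\equiv k!$, weighted homogeneity of $\mathcal B_n$, and then dividing by $u^nH_u(w)$ and letting $u\downarrow0$. The only variation is that you get uniformity of the differentiated tail on $K$ from joint holomorphy in $(u,w)$ and Cauchy estimates, whereas the paper bounds $\sup_K|P_m^{(r)}|\le C_{K,r}m^rR_K^m$ directly and sums the tail; both arguments work.
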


\begin{proof}
Fix a compact interval \(K\subset(2,\infty)\) and an integer \(n\ge1\).  On \(K\), the functions \(v(w)\) and \(v(w)^{-1}\) are bounded.  Since \(F_a(0)=1\), choose \(\rho>0\) such that \(F_a\) is zero-free on \(|z|<\rho\), and choose the branch of \(\log F_a\) there with value \(0\) at the origin.  For all sufficiently small \(u>0\), the two arguments \(b\theta uv(w)\) and \(b\theta u/v(w)\) lie in a compact subdisk of \(|z|<\rho\), uniformly for \(w\in K\).  Hence the Taylor series for \(\log F_a\), differentiated after composition with \(v(w)\) and \(v(w)^{-1}\) up to order \(n\), converges uniformly on \(K\).  In particular, all remainders below are locally uniform in \(w\).

For \(w>2\), let \(v=v(w)\).  Up to factors independent of \(w\),
\begin{equation}
H_u(w)
=
e^{-bu(v+v^{-1})}
F_a(b\theta uv)F_a(b\theta u/v).
\end{equation}
Taking logarithms and using \eqref{eq:log-Fa}, for sufficiently small \(u\) we have, uniformly on \(K\),
\begin{equation}
\log H_u(w)
=
C_u-buw+
\sum_{m=1}^{\infty}q_m(b\theta u)^mP_m(w),
\end{equation}
where \(C_u\) is independent of \(w\).

The analyticity just noted also controls the differentiated tail.  To see this explicitly, put
\begin{equation}
R_K=\sup_{w\in K}\max\{v(w),v(w)^{-1}\}.
\end{equation}
For each \(0\le r\le n\), differentiating \(P_m(w)=v(w)^m+v(w)^{-m}\) \(r\) times gives
\begin{equation}
\sup_{w\in K}|P_m^{(r)}(w)|
\le C_{K,r}m^rR_K^m
\qquad(m\ge1).
\end{equation}
Choose \(u_0>0\) so that \(|b\theta|u_0R_K\) lies inside the disk of convergence of \(\log F_a\).  Then, uniformly for \(0<u<u_0\) and \(w\in K\), the \(r\)-th derivative of the tail
\begin{equation}
\sum_{m\ge r+1}q_m(b\theta u)^mP_m(w)
\end{equation}
is bounded by
\begin{equation}
C_{K,r}\sum_{m\ge r+1}|q_m|\,m^r(|b\theta|u)^mR_K^m
=O(u^{r+1}).
\end{equation}
Thus the contribution of the terms \(m\ge r+1\) to the \(r\)-th \(w\)-derivative is \(O(u^{r+1})\), locally uniformly on \(K\).

Since \(P_m\) has degree \(m\) and leading coefficient \(1\),
\begin{equation}
\frac{\dd^r}{\dd w^r}P_m(w)=0
\quad(m<r),
\end{equation}
and
\begin{equation}
\frac{\dd^r}{\dd w^r}P_r(w)=r!.
\end{equation}
Thus, for \(r=1\),
\begin{equation}
\frac{\dd}{\dd w}\log H_u(w)
=
\left(q_1b\theta-b\right)u+O(u^2)
=
b\left(\frac{\theta}{a}-1\right)u+O(u^2)
=
d_1u+O(u^2),
\end{equation}
and for \(2\le r\le n\),
\begin{equation}
\frac{\dd^r}{\dd w^r}\log H_u(w)
=
r!q_r(b\theta)^r u^r+O(u^{r+1})
=
d_ru^r+O(u^{r+1}).
\end{equation}
The standard exponential derivative formula gives
\begin{equation}
\frac{H_u^{(n)}(w)}{H_u(w)}
=
\mathcal B_n
\left(
(\log H_u)'(w),
(\log H_u)''(w),
\dots,
(\log H_u)^{(n)}(w)
\right).
\end{equation}
Substituting the previous asymptotics into the weighted homogeneity of the complete Bell polynomials gives \eqref{eq:bell-leading}.  Explicitly, every monomial in \(\mathcal B_n\) has weighted degree \(n\), where the variable in position \(r\) has weight \(r\); replacing \((\log H_u)^{(r)}\) by \(d_ru^r+O(u^{r+1})\) therefore gives the leading term \(u^n\mathcal B_n(d_1,\dots,d_n)\) and an \(O(u^{n+1})\) remainder, locally uniformly on \(K\).

If \(p\) is HCM, then \((-1)^nH_u^{(n)}(w)\ge0\) for every \(u>0\), \(w>2\), and \(n\ge1\).  Since \(H_u(w)>0\), \((-1)^nH_u^{(n)}(w)/H_u(w)\ge0\).  Dividing \eqref{eq:bell-leading} by \(u^n>0\) and then letting \(u\downarrow0\), with any fixed \(w>2\), gives \eqref{eq:hcm-necessary-bell}.
\end{proof}

\section{The Bessel-zero obstruction}

The Bell-polynomial sequence in Proposition~\ref{prop:bell-obstruction} has an explicit generating function.

\begin{lemma}[Signed Bell generating function]
\label{lem:bell-generating}
With \(d_m\) as in Proposition~\ref{prop:bell-obstruction},
\begin{equation}
\label{eq:bell-gen-positive}
\sum_{n=0}^{\infty}
\mathcal B_n(d_1,\dots,d_n)\frac{z^n}{n!}
=
e^{-bz}F_a(b\theta z).
\end{equation}
Equivalently,
\begin{equation}
\label{eq:signed-bell-gen}
\sum_{n=0}^{\infty}
(-1)^n\mathcal B_n(d_1,\dots,d_n)\frac{t^n}{n!}
=
e^{bt}F_a(-b\theta t).
\end{equation}
Moreover, equivalently as an ordinary Taylor expansion,
\begin{equation}
\label{eq:laguerre-coeff}
e^{bt}F_a(-b\theta t)
=
\sum_{n=0}^{\infty}
\frac{b^n}{(a)_n}L_n^{(a-1)}(\theta)t^n,
\end{equation}
where \(L_n^{(a-1)}\) denotes the generalized Laguerre polynomial, with normalization
\begin{equation}
L_n^{(a-1)}(\theta)
=
\frac{(a)_n}{n!}\,{}_1F_1(-n;a;\theta).
\end{equation}
Consequently,
\begin{equation}
\label{eq:bell-laguerre-sign}
(-1)^n\mathcal B_n(d_1,\dots,d_n)
=
\frac{n!b^n}{(a)_n}L_n^{(a-1)}(\theta).
\end{equation}
\end{lemma}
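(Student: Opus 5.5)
The plan is to read off the generating function directly from the definition \eqref{eq:bell-def} of the complete Bell polynomials. By \eqref{eq:log-Fa} and the definition of the \(d_m\),
\begin{equation}
\sum_{m=1}^{\infty}d_m\frac{z^m}{m!}
=
-bz+\sum_{m=1}^{\infty}q_m(b\theta z)^m .
\end{equation}
This holds because the \(m=1\) term is \(b(\theta/a-1)z=-bz+q_1 b\theta z\) (using \(q_1=1/a\)), and the terms with \(m\ge2\) are exactly \(q_m(b\theta z)^m\). The right-hand side equals \(-bz+\log F_a(b\theta z)\) for \(|z|\) small. Exponentiating and invoking \eqref{eq:bell-def} gives \eqref{eq:bell-gen-positive} as an identity of convergent power series near \(0\). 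Both sides are entire: the right side because \(F_a\) is entire, and the left side then by uniqueness of Taylor coefficients. Hence the identity holds as formal power series. Replacing \(z\) by \(-t\) gives \eqref{eq:signed-bell-gen}.

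Next I compute the ordinary Taylor coefficients of \(e^{bt}F_a(-b\theta t)\) by a Cauchy product:
\begin{equation}
[t^n]\,e^{bt}F_a(-b\theta t)
=\sum_{k=0}^{n}\frac{b^{n-k}}{(n-k)!}\cdot\frac{(-b\theta)^k}{k!\,(a)_k}
=\frac{b^n}{n!}\sum_{k=0}^{n}\binom nk\frac{(-\theta)^k}{(a)_k}.
\end{equation}
Since \((-n)_k/k!=(-1)^k\binom nk\), the inner sum is \({}_1F_1(-n;a;\theta)\). With the stated normalization this gives \(b^n\,{}_1F_1(-n;a;\theta)/n!=\frac{b^n}{(a)_n}L_n^{(a-1)}(\theta)\), which is \eqref{eq:laguerre-coeff}.

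Finally, \eqref{eq:bell-laguerre-sign} follows by comparing coefficients of \(t^n\) in \eqref{eq:signed-bell-gen} and \eqref{eq:laguerre-coeff}: the coefficient there is \((-1)^n\mathcal B_n/n!\), so \((-1)^n\mathcal B_n=n!\,b^n L_n^{(a-1)}(\theta)/(a)_n\).

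I do not expect a genuine obstacle. The only points needing care are the bookkeeping of the \(m=1\) term, where \(q_1=1/a\) absorbs \(\theta/a\), and justifying that the local identity near \(0\) determines all coefficients. The latter is immediate because \eqref{eq:bell-def} is an identity of formal power series, and it agrees with the convergent expansions on a small disk.
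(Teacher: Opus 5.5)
Your proposal is correct and follows the paper's proof essentially step for step. Both arguments identify \(\sum_m d_m z^m/m!\) with \(-bz+\log F_a(b\theta z)\), exponentiate via the Bell definition, and extend the identity to all \(z\) because the right-hand side is entire; both then compute the \(t^n\) coefficient of \(e^{bt}F_a(-b\theta t)\) by a Cauchy product and match it to \({}_1F_1(-n;a;\theta)\) and the stated Laguerre normalization. Your explicit check that \(q_1=1/a\) absorbs the \(b\theta/a\) part of \(d_1\), and your final coefficient comparison giving \eqref{eq:bell-laguerre-sign}, fill in details the paper leaves implicit.
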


\begin{proof}
By the defining identity \eqref{eq:bell-def}, as a formal power series and hence as an analytic identity near \(z=0\),
\begin{equation}
\sum_{n=0}^{\infty}
\mathcal B_n(d_1,\dots,d_n)\frac{z^n}{n!}
=
\exp\left(\sum_{m=1}^{\infty}d_m\frac{z^m}{m!}\right).
\end{equation}
Using the definitions of \(d_m\),
\begin{equation}
\sum_{m=1}^{\infty}d_m\frac{z^m}{m!}
=
-bz+\sum_{m=1}^{\infty}q_m(b\theta z)^m
=
-bz+\log F_a(b\theta z).
\end{equation}
Therefore the Taylor coefficients at the origin of the left-hand side are those of the entire function \(e^{-bz}F_a(b\theta z)\).  Since \(e^{-bz}F_a(b\theta z)\) is entire, the series in \eqref{eq:bell-gen-positive} has infinite radius of convergence and the identity holds for all \(z\in\mathbb C\).  Replacing \(z\) by \(-t\) gives \eqref{eq:signed-bell-gen}.

For \eqref{eq:laguerre-coeff}, expand directly, using the standard normalization of generalized Laguerre polynomials \cite[Eq.~18.5.12]{DLMF}:
\begin{equation}
e^{bt}F_a(-b\theta t)
=
\sum_{r=0}^{\infty}\frac{(bt)^r}{r!}
\sum_{k=0}^{\infty}
\frac{(-b\theta t)^k}{k!(a)_k}.
\end{equation}
The coefficient of \(t^n\) is
\begin{equation}
b^n\sum_{k=0}^{n}
\frac{(-\theta)^k}{(n-k)!k!(a)_k}
=
\frac{b^n}{n!}\,
{}_1F_1(-n;a;\theta).
\end{equation}
Using
\begin{equation}
L_n^{(a-1)}(\theta)
=
\frac{(a)_n}{n!}\,{}_1F_1(-n;a;\theta),
\end{equation}
we obtain \eqref{eq:laguerre-coeff}.
\end{proof}

The obstruction is now a one-line consequence of a classical Bessel identity.

\begin{lemma}[Positive zero of the signed generator]
\label{lem:positive-zero}
Let \(a,b,\theta>0\).  Then the entire function
\begin{equation}
t\mapsto e^{bt}{}_0F_1(;a;-b\theta t)
\end{equation}
has a positive real zero.
\end{lemma}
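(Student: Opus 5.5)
Since \(e^{bt}\) never vanishes, the zeros of \(t\mapsto e^{bt}{}_0F_1(;a;-b\theta t)\) are exactly the zeros of \(t\mapsto F_a(-b\theta t)\).  The plan is therefore to show that \(F_a(-s)\) has a positive zero \(s_0>0\); then \(t_0=s_0/(b\theta)>0\) is a zero of the signed generator, because \(b\theta>0\).

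To locate \(s_0\), use the classical identity linking \({}_0F_1\) at negative argument to the Bessel function of the first kind, the \(J\)-analogue of the \(I_{a-1}\) identity quoted in the proof of Corollary~\ref{cor:noncentral-chi-square}.  From the series \(J_\nu(z)=\sum_k (-1)^k (z/2)^{2k+\nu}/(k!\,\Gamma(k+\nu+1))\) \cite{DLMF} one reads off
\begin{equation}
J_{a-1}(x)=\frac{(x/2)^{a-1}}{\Gamma(a)}\,{}_0F_1\!\left(;a;-\frac{x^2}{4}\right),
\qquad x>0 .
\end{equation}
Since \((x/2)^{a-1}/\Gamma(a)>0\) for \(x>0\), the positive zeros of \(x\mapsto F_a(-x^2/4)\) coincide with the positive zeros of \(J_{a-1}\).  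The order here is \(\nu=a-1>-1\) because \(a>0\).

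The remaining input is the classical fact that \(J_\nu\) has infinitely many positive real zeros for every \(\nu>-1\).  This is standard: the zeros are real and simple by Lommel's theorem, and the large-\(x\) asymptotic \(J_\nu(x)\sim\sqrt{2/(\pi x)}\cos(x-\nu\pi/2-\pi/4)\) shows \(J_\nu\) changes sign infinitely often on \((0,\infty)\).  I would simply cite it \cite{DLMF}.  Let \(j_{a-1,1}>0\) be the first such zero.  Then \(s_0=j_{a-1,1}^2/4\) is a positive zero of \(F_a(-s)\), and hence
\begin{equation}
t_0=\frac{j_{a-1,1}^2}{4b\theta}
\end{equation}
is a positive real zero of \(e^{bt}F_a(-b\theta t)\).

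The only step with any content is the existence of a positive Bessel zero in the full range \(\nu>-1\), including \(-1<\nu<0\) (that is, \(0<a<1\)).  For that range I would rely on the asymptotic sign-change argument rather than interlacing arguments that are usually stated for \(\nu\ge0\).  Everything else is a direct substitution.
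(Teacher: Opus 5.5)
Your proposal is correct and follows the paper's proof: the same identity ${}_0F_1(;a;-x^2/4)=\Gamma(a)(x/2)^{1-a}J_{a-1}(x)$, the existence of a positive zero of $J_{a-1}$ for $a-1>-1$, and the same choice $t_0=j_{a-1,1}^2/(4b\theta)$. Your asymptotic sign-change justification for $-1<\nu<0$ is a welcome addition, since the paper simply cites DLMF for that step.
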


\begin{proof}
The classical identity \cite[Eq.~10.16.9]{DLMF}
\begin{equation}
{}_0F_1\!\left(;a;-\frac{x^2}{4}\right)
=
\Gamma(a)\left(\frac{x}{2}\right)^{1-a}J_{a-1}(x)
\end{equation}
holds for \(a>0\).  The prefactor \(\Gamma(a)(x/2)^{1-a}\) is nonzero for \(x>0\).  Since \(a-1>-1\) is real, the Bessel function \(J_{a-1}\) has positive real zeros \cite[Section~10.21(i)]{DLMF}.  Let \(j_{a-1,1}>0\) denote its first positive zero.  Set
\begin{equation}
t_0=\frac{j_{a-1,1}^2}{4b\theta}>0.
\end{equation}
Then
\begin{equation}
{}_0F_1(;a;-b\theta t_0)
=
{}_0F_1\!\left(;a;-\frac{j_{a-1,1}^2}{4}\right)
=
0.
\end{equation}
Multiplication by \(e^{bt_0}>0\) does not change the zero.
\end{proof}

\section{Proof of Theorem~\ref{thm:main}}
\label{sec:proof-main}

First suppose \(\theta=0\).  Then
\begin{equation}
p_{a,b,0}(x)=\gamma_{a,b}(x)
=
\frac{b^a}{\Gamma(a)}x^{a-1}e^{-bx}.
\end{equation}
For \(u>0\),
\begin{equation}
\gamma_{a,b}(uv)\gamma_{a,b}(u/v)
=
C_u e^{-bu(v+v^{-1})}
=
C_u e^{-buw}.
\end{equation}
Therefore
\begin{equation}
(-1)^n\frac{\dd^n}{\dd w^n}
\left(C_u e^{-buw}\right)
=
C_u(bu)^ne^{-buw}\ge0,
\end{equation}
so \(p_{a,b,0}\in\HCM\).

Now suppose \(\theta>0\), and assume for contradiction that \(p_{a,b,\theta}\in\HCM\).  Proposition~\ref{prop:bell-obstruction} gives
\begin{equation}
(-1)^n\mathcal B_n(d_1,\dots,d_n)\ge0
\qquad(n\ge1).
\end{equation}
Since \(\mathcal B_0=1\), all Taylor coefficients of
\begin{equation}
e^{bt}F_a(-b\theta t)
=
\sum_{n=0}^{\infty}
(-1)^n\mathcal B_n(d_1,\dots,d_n)\frac{t^n}{n!}
\end{equation}
are nonnegative, and the constant coefficient is \(1\).  By Lemma~\ref{lem:bell-generating}, this is an entire Taylor expansion, so it may be evaluated at every \(t>0\).  Hence this entire function must be strictly positive for every \(t>0\).  Indeed, at every \(t>0\),
\begin{equation}
\sum_{n=0}^{\infty}
(-1)^n\mathcal B_n(d_1,\dots,d_n)\frac{t^n}{n!}
\ge1.
\end{equation}
This contradicts Lemma~\ref{lem:positive-zero}, which gives a positive zero.  Therefore \(p_{a,b,\theta}\notin\HCM\) whenever \(\theta>0\).  The proof is complete.
\(\square\)

\begin{remark}[Laguerre-polynomial form of the obstruction]
By \eqref{eq:bell-laguerre-sign}, the HCM necessary signs would imply
\begin{equation}
L_n^{(a-1)}(\theta)\ge0
\qquad(n\ge0).
\end{equation}
The Bessel-zero argument proves that for every \(a>0\) and every \(\theta>0\), at least one index \(n\ge1\) satisfies
\begin{equation}
L_n^{(a-1)}(\theta)<0.
\end{equation}
For such an \(n\), Proposition~\ref{prop:bell-obstruction} gives an actual violation of the \(n\)-th HCM derivative sign for all sufficiently small \(u>0\).
\end{remark}

\begin{remark}[Why the result is not a mere convexity counterexample]
The obstruction is stronger than a low-order failure such as non-log-convexity or non-monotonicity.  The first few HCM signs may hold for some parameter ranges.  The proof shows that no matter how many early signs survive, the infinite HCM sign sequence cannot survive positive deformation, because the signed Bell generator has a positive Bessel zero.
\end{remark}

\section{Final remarks}

The theorem gives the sharp HCM answer to the noncentral chi-square range problem raised in \cite[Section~4.2]{BariczPrabhuSinghVijesh2026}: the central gamma line is the whole HCM range, and every positive noncentrality destroys HCM.  It also gives a concrete closure failure: fixed-rate Poisson mixing over the shape parameter can take gamma, hence HCM, densities outside the HCM class.  This negative result is deliberately limited to density-side HCM.  It does not contradict the known infinite divisibility of the noncentral chi-square law, and it does not decide whether any positive-noncentrality subfamily belongs to the generalized-gamma-convolution class.


\begin{thebibliography}{99}

\bibitem[Baricz et~al.(2026)]{BariczPrabhuSinghVijesh2026}
\'A. Baricz, D. Prabhu K, S. Singh, and A. Vijesh V.,
\newblock \emph{Infinitely divisible modified Bessel distributions},
\newblock Pacific Journal of Mathematics \textbf{343} (2026), 261--313.
\newblock arXiv:2406.17721.
\newblock DOI: \href{https://doi.org/10.2140/pjm.2026.343.261}{10.2140/pjm.2026.343.261}.

\bibitem[Baricz et~al.(2021)]{BariczJankovMasirevicPogany2021}
\'A. Baricz, D. Jankov Ma\v{s}irevi\'c, and T. K. Pog\'any,
\newblock \emph{Approximation of CDF of non-central chi-square distribution by mean-value theorems for integrals},
\newblock Mathematics \textbf{9} (2021), 129.
\newblock DOI: \href{https://doi.org/10.3390/math9020129}{10.3390/math9020129}.

\bibitem[Bondesson(1981)]{Bondesson1981}
L. Bondesson,
\newblock \emph{Classes of infinitely divisible distributions and densities},
\newblock Z. Wahrscheinlichkeitstheorie Verw. Gebiete \textbf{57} (1981), 39--71.
\newblock DOI: \href{https://doi.org/10.1007/BF00533713}{10.1007/BF00533713}.

\bibitem[Bondesson(1992)]{Bondesson1992}
L. Bondesson,
\newblock \emph{Generalized Gamma Convolutions and Related Classes of Distributions and Densities},
\newblock Lecture Notes in Statistics, vol. 76,
\newblock Springer, New York, 1992.

\bibitem[Bondesson(2015)]{Bondesson2015}
L. Bondesson,
\newblock \emph{A class of probability distributions that is closed with respect to addition as well as multiplication of independent random variables},
\newblock J. Theoret. Probab. \textbf{28} (2015), 1063--1081.
\newblock DOI: \href{https://doi.org/10.1007/s10959-013-0523-y}{10.1007/s10959-013-0523-y}.

\bibitem[NIST DLMF(2026)]{DLMF}
NIST Digital Library of Mathematical Functions,
\newblock F. W. J. Olver, A. B. Olde Daalhuis, D. W. Lozier, B. I. Schneider, R. F. Boisvert,
C. W. Clark, B. R. Miller, B. V. Saunders, H. S. Cohl, and M. A. McClain, eds.,
\newblock Version 1.2.7, release date 2026-06-15.
\newblock \url{https://dlmf.nist.gov/}.

\bibitem[Ismail and Kelker(1979)]{IsmailKelker1979}
M. E. H. Ismail and D. H. Kelker,
\newblock \emph{Special functions, Stieltjes transforms and infinite divisibility},
\newblock SIAM J. Math. Anal. \textbf{10} (1979), 884--901.
\newblock DOI: \href{https://doi.org/10.1137/0510083}{10.1137/0510083}.

\bibitem[Johnson et~al.(1994)]{JohnsonKotzBalakrishnan1994}
N. L. Johnson, S. Kotz, and N. Balakrishnan,
\newblock \emph{Continuous Univariate Distributions, Volume 1},
\newblock 2nd ed.,
\newblock Wiley, New York, 1994.

\bibitem[Thorin(1977)]{Thorin1977}
O. Thorin,
\newblock \emph{On the infinite divisibility of the Pareto distribution},
\newblock Scand. Actuar. J. \textbf{1977} (1977), 31--40.
\newblock DOI: \href{https://doi.org/10.1080/03461238.1977.10405623}{10.1080/03461238.1977.10405623}.

\end{thebibliography}
\end{document}